\newtheorem{Thm}[equation]{Theorem}
\newtheorem{Cor}[equation]{Corrolary}
\newtheorem{Prop}[equation]{Proposition}
\newtheorem{Lem}[equation]{Lemma}
\newtheorem{Remark}[equation]{Remark}
\newtheorem{Def}[equation]{Definition}
\newcommand{\C}{\mathbb C}
\newcommand{\A}{\mathbb A}
\newcommand\Ker{\operatorname{Ker}}
\newcommand\Span{\operatorname{Span}}
\newcommand\Rep{\operatorname{Rep}}
\newcommand\ind{\operatorname{ind}}
\newcommand\Ind{\operatorname{Ind}}
\newcommand\tr{\operatorname{tr}}
\newcommand\Hom{\operatorname{Hom}}
\newcommand\PGL{\operatorname{PGL}}
\newcommand{\diag}{\operatorname{diag}}
\theoremstyle{plain}
\title{The twisted Satake isomorphism and Casselman-Shalika Formula}
\author{Nadya Gurevich}
\address{School of Mathematics, Ben Gurion University of the Negev, POB 653, Be'
er Sheva 84105, Israel}
\email{ngur@math.bgu.ac.il}
\numberwithin{equation}{section}
\begin{document}
\maketitle

\begin{abstract}
For an arbitrary split adjoint group
we identify the unramified Whittaker space with the space
of skew-invariant functions on the lattice of coweights and 
deduce from it the Casselman-Shalika formula.
\end{abstract} 
\section{Introduction}

The Casselman-Shalika formula is a beautiful formula relating values of special functions
 on a $p$-adic group to the values of finite dimentional complex representations of its
dual group. Further, the formula is particularly useful in the theory of automorphic forms 
for studying $L$-functions.

In this note we provide a new approach to (and a new proof of) Casselman-Shalika formula
for the value of spherical Whittaker functions.

To state our results we fix some notations. Let $G$ be a split adjoint group
over a local field $F$. 
We fix a Borel subgroup $B$ with unipotent radical  $N$, 
and consider its Levi decomposition $B=NT$,
 where $T$ is the maximal split torus.
Furthermore, we fix a maximal compact subgroup $K.$

Let $\Psi$ be a non-degenerate complex character of $N$.
For an irreducible  representation $\pi$ of  $G$ it is well known that 
$\dim \Hom_{G}(\pi,\ind^G_N \Psi) \leq 1$ and in case it is non-zero we say that 
the representation $\pi$ is {\sl generic}. 
The Whittaker model of such a generic irreducible representation $\pi$ of
 $G$ is the image of an embedding  $$W:\pi\hookrightarrow \Ind^G_N \Psi.$$

Let now $\pi$ be a generic irreducible representation. 
Recall that $\pi$ is called {\sl  unramified} if $\pi^{K} \ne \{0\}$ and that in this case $\pi^{K}=\C \cdot v_{0}.$ 
Here $v_0$ is a spherical vector. The explicit formula for the function
$W(v_0)$ was given in [CS] and is commonly called {\sl the Casselman-Shalika formula.} 

Recall that there is a bijection between irreducible unramified
 representations of $G$ and the spectrum of the spherical Hecke algebra $H_K=C_c(K\backslash G/K).$
This commutative algebra admits the following description. 
Let $\Lambda$ be the coweight lattice of $G.$  Recall that $\Lambda$  is canonically identified with $T/(T\cap K)$. 
The Weyl group $W$ acts naturally on the lattice $\Lambda$. 
Denote by $\C[\Lambda]^W$ the algebra of $W$-invariant
elements in $\C[\Lambda]$. 

The Satake isomorphism 
$$S:H_K \simeq \left(\ind^T_{T\cap K} 1\right)^W=\C[\Lambda]^W$$
is defined by
$$S(f)(t)=\delta_B^{-1/2}(t)\int\limits_{N} f(nt)\,dn$$

The main result of the paper is a description 
of  the  Whittaker spherical space $\left(\Ind^G_N \Psi \right)^K$
as a concrete $H_{K} \simeq\C[\Lambda]^{W}$-module. Namely, it is identified with the space 
$\C[\Lambda]^{W,-}$ of functions
on the lattice of coweights, that are skew-invariant under the action of the Weyl group $W$.
More formally, 
\begin{Thm}
 There is a canonical  isomorphism  
$$j:\left(\ind^G_N \Psi\right)^K\rightarrow  \C[\Lambda]^{W,-},$$ 
compatible with Satake isomorphism  $S:H_K\simeq \C[\Lambda]^W.$
\end{Thm}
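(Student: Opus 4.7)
The plan is to construct $j$ by Weyl-antisymmetrizing the restriction to the torus, verify Hecke equivariance by a direct unfolding, and establish bijectivity by matching bases through a rank-one analysis of the support conditions.

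\textbf{Construction of $j$.}
By the Iwasawa decomposition $G=NTK$, any $W\in(\ind^G_N\Psi)^K$ is determined by its restriction to $T$, which by $(T\cap K)$-invariance descends to $\Lambda=T/(T\cap K)$. I would set
\[ \tilde W(\lambda)=\delta_B^{-1/2}(t^\lambda)\,W(t^\lambda),\qquad j(W)(\lambda)=\sum_{w\in W}(-1)^{\ell(w)}\,\tilde W(w\lambda). \]
By construction $j(W)$ is $W$-skew-invariant, and compact support of $W$ modulo $N$ gives finite support to $\tilde W$ and hence to $j(W)$, so $j(W)\in\C[\Lambda]^{W,-}$. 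The $\delta_B^{-1/2}$ twist is chosen to match the Satake normalization.

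\textbf{Hecke equivariance.}
For $f\in H_K$, unfolding the convolution $(f*W)(t)=\int_G f(h)W(th)\,dh$ via Iwasawa gives $\widetilde{f*W}=S(f)*\tilde W$ as convolution in $\C[\Lambda]$, and since $S(f)$ is $W$-invariant it commutes with the antisymmetrization, yielding $j(f*W)=S(f)\cdot j(W)$. Thus $j$ is an $H_K$-module map, intertwining the two actions through Satake. This step is a routine change of variables.

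\textbf{Bijectivity (the main obstacle).}
The crux is to exhibit matching bases of the two sides. The $(N,K)$-double coset $Nt^\mu K$ supports a nonzero $\Psi$-equivariant, $K$-invariant function $\phi_\mu$ precisely when $\Psi|_{N\cap t^\mu K t^{-\mu}}\equiv 1$. Decomposing $N$ into simple root subgroups reduces this to a rank-one analysis inside each Levi $M_\alpha=\langle T,U_{\pm\alpha}\rangle\simeq \SL_2$ or $\PGL_2$ modulo center, whose upshot is a strict-dominance condition on $\mu\in\Lambda$ relative to the conductor of $\Psi$; the resulting $\phi_\mu$'s then form a basis of the source. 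A direct evaluation yields
\[ j(\phi_\mu)=\delta_B^{-1/2}(t^\mu)\sum_{w\in W}(-1)^{\ell(w)}\delta_{w\mu}, \]
a nonzero scalar multiple of the standard basis vector of $\C[\Lambda]^{W,-}$ indexed by the strictly dominant $\mu$. This matches the two bases and gives bijectivity. The non-degeneracy of $\Psi$ is decisive in the rank-one analysis, ensuring that the support condition cuts out strictly (rather than weakly) dominant coweights and thereby excluding potential wall-supported kernel elements.

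\textbf{Conclusion.}
Bijectivity combined with Hecke equivariance yields the canonical $H_K$-module isomorphism of the theorem, with the $\C[\Lambda]^W$-module structure on $\C[\Lambda]^{W,-}$ realized through Satake.
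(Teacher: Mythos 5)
Your construction of $j$ is the same one the paper arrives at: $j$ sends $\phi_\mu$ to $r_\mu=\operatorname{alt}(e^\mu)$, and both sides of the isomorphism have bases indexed by strictly dominant coweights $\mu\in\Lambda^++\rho$, so bijectivity at the level of vector spaces is easy. The fatal gap is your claim that Hecke equivariance follows by ``a routine change of variables'' from the Iwasawa decomposition. The identity $\widetilde{f*W}=S(f)*\tilde W$ in $\C[\Lambda]$ is false. If you actually unfold $(f*W)(t)$ over $G=NTK$, the $N$-integral you obtain is not the Satake integral $\int_N f(nt')\,dn$ but the twisted integral $\int_N f(nt')\,\Psi(t n t^{-1})\,dn$, and the conjugated character $\Psi(t\cdot t^{-1})$ does not disappear. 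There is also a direct obstruction: $\widetilde{f*W}$ is supported in the strictly dominant cone $\Lambda^++\rho$ (this is the support property of any $K$-spherical element of $\ind^G_N\Psi$), whereas $S(f)\cdot\tilde W$ is not; for $W=\phi_\mu$ and $f=A_\lambda$ the right-hand side is $\sum_{\nu\in wt(V_\lambda)}e^{\mu+\nu}$, which generically has terms outside that cone. The two sides agree only after applying $\operatorname{alt}$, and that equality is precisely the substance of the theorem (it is equivalent to the Weyl character formula / Casselman--Shalika identity), not a change of variables.

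The paper circumvents this by never trying to unfold over $N$ directly. It works through the intermediate module $H_{I,K}=C_c(I\backslash G/K)$ and the twisted Satake map $S_\Psi$, which is $H_K$-equivariant for the trivial reason that right convolution by $H_K$ commutes with the left $N$-integral. It then proves surjectivity by computing $S_\Psi(\theta_\mu^K)=\phi_\mu$ via the Iwahori factorization, and identifies the kernel by showing $S_\Psi(\theta^K_\mu+\theta^K_{s\mu})=0$, using the Bernstein relations (that $\iota_\alpha=1_I+t_s$ commutes with $\theta_{k\alpha}+\theta_{-k\alpha}$) together with the vanishing of $\int_{N\cap I_\alpha}\overline{\Psi}$. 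This produces $(\ind^G_N\Psi)^K\simeq H_{I,K}/\Ker S_\Psi\simeq\C[\Lambda]^{W,-}$ as $\C[\Lambda]^W$-modules. So the equivariance is encoded in the structure of the Iwahori--Hecke algebra rather than extracted from a naive unfolding. To repair your argument you would need to supply a genuine proof of equivariance; one route is to reproduce the Iwahori--Hecke computation, another is to invoke uniqueness of the Whittaker model as in Casselman--Shalika (which the paper deliberately avoids).
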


From this result it easily follows that the twisted Satake map
$$S_\Psi: C_c(G/K) \rightarrow \left(\ind^G_N \Psi\right)^K, \quad
S_\Psi(f)(t)=\int\limits_{N} f(nt)\overline{\Psi(n)} \,dn$$
sends  the spectral basis of the spherical Hecke algebra
$H_K=C_c(K\backslash G/K)$
to the basis of characteristic functions of $\left(\ind^G_N \Psi\right)^K$.
In [FGKV], it is explained that this latter result is equivalent 
to the Casselman-Shalika formula in [CS] (see section \ref{CS:section} for a full account). Thus, we obtain 
a proof of the Casselman-Shalika formula that does not 
use the uniqueness of the Whittaker model. 
\vskip 10pt

Let us now quickly describe our proof of the main result. 
It was inspired by a new simple proof \cite{S} by Savin of the Satake
isomorphism of algebras
$$S:H_K \simeq \left(\ind^T_{T\cap K} 1\right)^W=\C[\Lambda]^W.$$
Savin has observed that the Satake map 
$S: C_c(G/K) \rightarrow \left(\ind^G_N 1\right)^K$ 
restricted to $$C_c(I\backslash G/K)=\left(\ind^G_I 1\right)^K,$$ 
where $I$ is an Iwahori
subgroup, defines an explicit isomorphism
$$\left(\ind^G_I 1\right)^K \simeq \ind^T_{T\cap K} 1=\C[\Lambda].$$

Restricting $S_\Psi$ to $\left(\ind^G_I 1 \right)^K$, we prove 
there there exists an isomorphism $$j:\left(\ind^G_N \Psi\right)^K\rightarrow  \C[\Lambda]^{W,-}$$
of $H_K\simeq \C[\Lambda]^W$-modules,
making the following diagram

$$
\begin{diagram}
\node{\left(\ind^G_I 1 \right)^K}
\arrow{e,t}{S_\Psi}
\arrow{s,l}{S}
\node {\left(\ind^G_N \Psi\right)^K}
\arrow{s,r}{j} \\
\node{\C[\Lambda]}
\arrow{e,t}{alt} 
\node{\C[\Lambda]^{W,-}}
\end{diagram}
$$
commutative. Here the space $\C[\Lambda]^{W,-}$ is a space of 
$W$ skew-invariant elements of $\C[\Lambda]$ 
and the alternating  map $alt$ are defined in the section \ref{func:lattices}.

\vskip 5pt
{\bf Acknowlegdements.}
I wish to thank Gordan Savin for explaining his proof of the Satake isomorphism
and encouraging me to prove the Casselman-Shalika formula. I am most grateful
to Joseph Bernstein for his attention and his help in formulating
the main result. I thank Roma Bezrukavnikov for answering my questions
and Eitan Sayag for his suggestions for improving the presentation.
My research is partly supported by ISF grant $1691/10$.

\section{Notations}
Let $F$ be a local non-archimedean field and let $q$ be a characteristic of 
its residue field.
Let $G$ be a split adjoint group defined over $F$. 
Denote by $B$ a Borel subgroup of $G$, by $N$ its unipotent radical,
by $\bar N$ the opposite unipotent radical,
 by $T$ the maximal split torus and  by $W$  the Weyl group.

Denote by $R$ the set of positive roots of $G$
and by $\Delta$ the set of simple roots.
For each $\alpha\in R$ let 
$x_\alpha:F\rightarrow N$ denote the one parametric subgroup 
corresponding to the root $\alpha$ and 
$N^k_\alpha=\{x_\alpha(r):|r|\le q^{-k}\}$.

Let $\Psi$ be a non-degenerate complex character of $N$ of conductor $1$,
i.e. for any $\alpha\in \Delta$ 
$$\Psi|_{N^{0}_\alpha}\neq 1, \Psi|_{N^1_\alpha}=1.$$

Let $K$ be a maximal compact subgroup of $G$.
Then $T_K=T\cap K$ is a maximal compact subgroup of $T$.
Choose an Iwahori subgroup $I\subset K$ such that
$I\cap N=N_\alpha^1$ for all $\alpha \in R$.
In particular $\Psi_{N\cap I}=1$, but 
$\Psi|_{N^0_\alpha}\neq 1$ for any $\alpha \in \Delta$.

We fix a Haar measure on $G$ normalized such that 
the measure of $I$ is one.

The coweight lattice $\Lambda$
of $G$ is canonically identified with $T/T_K.$
For any $\lambda\in \Lambda$ denote by  $t_\lambda\in T$ its representative.
The coweight $\rho$ denotes the half of all the positive coroots.
Since $G$ is adjoint one has $\rho\in \Lambda$.
We denote by $\Lambda^+$ the set of dominant coweights.

Let $^LG$ be the complex dual group of $G$.
Then $\Lambda$ is also identified with the lattice of weights of $^LG$.
For a dominant weight $\lambda$ we denote
by  $V_\lambda$  the highest weight module
of $^LG$ and by $wt(V_\lambda)$ the multiset of all the weights of this module.

\section{Functions on lattices}\label{func:lattices}

Consider the algebra $\C[\Lambda]=Span\{ e^\nu: \nu \in \Lambda\}.$
The Weyl group $W$ acts naturally on the lattice $\Lambda$. 
We denote by $\C[\Lambda]^W$ the algebra of $W$-invariant
elements in $\C[\Lambda]$.
The character map defines an isomorphism of algebras
$$Rep(^LG)\simeq \C[\Lambda]^W.$$
For an irreducible module $V_\lambda$ denote
$$a_\lambda=char(V_\lambda)=\sum_{\nu\in wt(V_\lambda)} e^\nu.$$
The elements $\{a_\lambda|\lambda\in \Lambda^+\}$ 
form a basis of $\C[\Lambda]^W$.
The algebra $\C[\Lambda]^W$ acts on the space $\C[\Lambda]$ by multiplication. 
\vskip 10pt
The element $f\in \C[\Lambda]$ is called {\sl skew-invariant}
if $w(f)=(-1)^{l(w)}f$, where $l(w)$ is the length of the element $w$.
Denote by  $\C[\Lambda]^{W,-}$ the space of $W$ skew-invariant elements.
 The algebra $\C[\Lambda]^W$
acts on $\C[\Lambda]^{W,-}$ by multiplication. Note that the action
is torsion free. 

Define the alternating map 
$$alt:\C[\Lambda] \rightarrow \C[\Lambda]^{W,-},\quad 
alt(e^\mu)=\sum_{w\in W} (-1)^{l(w)}e^{w\mu}: \quad \mu\in \Lambda$$
It is a map of $C[\Lambda]^W$ modules.
The elements  
 $$\{r_{\mu+\rho}=
\sum_{w\in W} (-1)^{l(w)}e^{w(\mu+\rho)}: \quad \mu\in \Lambda^+\}$$
form a basis of $\C[\Lambda]^{W,-}.$
Note that for any $\lambda\in \Lambda^+$
 $$alt(e^{\lambda+\rho})=r_{\lambda+\rho}=r_\rho\cdot a_\lambda=alt(a_\lambda),$$
where the second equality is the Weyl character formula.

\section{Hecke algebras}

\subsection{The spherical Hecke algebra}
The  spherical Hecke algebra $H_K=C_c(K\backslash G/K)$
is the algebra of locally constant compactly supported 
bi-$K$ invariant functions with the multiplication given by
convolution $\ast$. It has identity element $1_K$ - the characteristic
function of $K$ divided by $[K:I]$. 

Consider the Satake map  
$$S: C_c(G/K)\rightarrow C(N\backslash G/K)=C(T/T_K)$$ defined by
$$S(f)(t)=\delta_B^{-1/2}(t)\int\limits_{N} f(nt)dn.$$
The famous Satake theorem claims that the restriction
of $S$ to $H_K$ defines an isomorphism 
of algebras $S:H_K\simeq \C[\Lambda]^W$.
Denote by $A_\lambda$ the element of  $H_K$ corresponding to $a_\lambda$
under this map. Thus $H_K=\Span\{ A_\lambda: \lambda\in \Lambda^+\}$.


\subsection{The Iwahori-Hecke algebra}\label{IH:algebra}

The Iwahori-Hecke algebra $H_I=C_c(I\backslash G/I)$
is the algebra of locally constant compactly supported 
bi-$I$ invariant functions with the multiplication given by
convolution. Below we remind the list of properties of $H_I$,
all can be found in [HKP].
\begin{enumerate}
\item
The algebra $H_I$ 
contains a commutative algebra 
$A\simeq \C[\Lambda]$. 
$$A=\Span\{\theta_\mu|\,\, \mu \in \Lambda\},$$ where
$$\theta_\mu=\left\{
\begin{array}{ll} \delta^{1/2}_B 1_{I t_\mu I} & \mu\in \Lambda^+;\\
\theta_{\mu_1}\ast\theta_{\mu_2}^{-1}& \mu =\mu_1-\mu_2,
\quad  \mu_1,\mu_2\in \Lambda^+
\end{array}
\right..
$$
The center $Z_I$ of the algebra $H_I$ is $A^W\simeq \C[\Lambda]^W$.
\item
The finite dimensional Hecke algebra $H_f=C(I\backslash K/I)$ is 
a subalgebra of $H_I$. The elements $t_w=1_{IwI}$ , where $w\in W$
form a basis of $H_f$. Multiplication in $H$ induces a vector space
isomorphism 
$$H_f\otimes_\C A\rightarrow H_I$$
In particular the elements $t_w\theta_\mu$ where $w\in W, \mu \in \Lambda$
form a basis of $H_I$.
\item
The algebra $H_K$ is embedded naturally in $H_I$. 
One has $H_K=Z_I\ast 1_K$  and 
$$A_\lambda=\left(\sum_{\nu \in wt(V_\lambda)}\theta_\nu\right)\ast 1_K$$ 
\item
For the simple reflection $s\in W$ corresponding to 
a simple root $\alpha$ and a coweight $\mu$ one has 
$$t_s\theta_\mu=\theta_{s\mu}t_s+ 
(1-q) \frac{\theta_{s\mu}-\theta_{\mu}}{1-\theta_{-\alpha}}.$$

In particular $t_s$ commutes with $\theta_{k\alpha}+\theta_{-k\alpha}$
for any $k\ge 0$.
In addition $t_s$ commutes with $\theta_\mu$ whenever $s\mu=\mu$.
\end{enumerate}

\subsection{The intermediate algebra}
Finally consider the space $H_{I,K}$ defined by
$$H_K\subset H_{I,K}=H_I\ast 1_K=C_c(I\backslash G/K)\subset H_I.$$ It has a structure of 
right $H_K$ module. The  space $H_{I,K}$ plays a crucial role in Savin's paper [S].
The Satake map restricted to it is the isomorphism of 
$H_K\simeq \C[\Lambda]^W$ modules:
$$S:H_{I,K}\simeq \C[\Lambda],\quad S(\theta_\mu\ast 1_K)=e^\mu.$$
In particular, it is shown that the elements 
$\{\theta_\mu^K=\theta_\mu\ast 1_K,\,  \mu\in\Lambda\}$
form a basis of $H_{I,K}.$

\section {The Whittaker space $\left(\ind^G_N \Psi\right)^K$} 
Let $\Psi$ be a non-degenerate character of conductor $1$.
Consider the space $\left(\ind^G_N \Psi\right)^K$ 
of complex valued functions on $G$ that
are  $(N,\Psi)$-equivariant on the left,
right $K$-invariant functions and are compactly supported modulo 
$N$.

The space $\left(\ind^G_N \Psi\right)^K$ 
has a structure of right $H_K$ module by 
$$(\phi\ast f)(x)=\int\limits_{G} \phi(xy^{-1})f(y) dy, 
\quad \phi \in \left(\ind^G_N \Psi\right)^K, f\in H_K.$$

Any function $\phi$ on $\left(\ind^G_N \Psi\right)^K$ 
is determined by its values on 
$t_\lambda :\lambda\in \Lambda$
and $\phi(t_\lambda)=0$ unless $\lambda\in \Lambda^+\rho$.

The space $\left(\ind^G_N \Psi\right)^K$ 
has a basis of characteristic functions 
$\{\phi_\lambda: \lambda \in \Lambda^++\rho\}$ where
 $$\phi_\lambda(ntk)=\left\{
\begin{array}{ll}\delta^{1/2}_B(t)\Psi(n) & t\in Nt_\lambda K
\quad \lambda\in \Lambda^++\rho ;\\
0 & {\rm otherwise}
\end{array}
\right..$$

The main theorem of this paper is the description of
$\left(\ind^G_N \Psi\right)^K$ as $H_K$ module.

\begin{Thm}\label{main}
Let $\Psi$ be a character of conductor $1$.
Then there is an isomorphism 
$$j:\left(\ind^G_N\Psi\right)^K \simeq \C[\Lambda]^{W,-}$$ 
compatible with $H_K\simeq \C[\Lambda]^W$.
\end{Thm}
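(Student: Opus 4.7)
The plan is to construct $j$ by working on the intermediate algebra $H_{I,K} = H_I * 1_K$ and using the commutative diagram displayed in the introduction. Recall from Savin that $S : H_{I,K} \simeq \C[\Lambda]$ is a vector space isomorphism of $H_K \simeq \C[\Lambda]^W$-modules sending the basis $\theta_\mu^K = \theta_\mu * 1_K$ to $e^\mu$. I would first analyze the twisted Satake map $S_\Psi : H_{I,K} \to (\ind^G_N \Psi)^K$, and then define $j$ so that $j \circ S_\Psi = \mathrm{alt} \circ S$. The content of the theorem then becomes the statement that $S_\Psi|_{H_{I,K}}$ is surjective and has the same kernel as $\mathrm{alt} \circ S$.

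For surjectivity I would compute $S_\Psi(\theta_\lambda^K)(t_\nu)$ explicitly for $\lambda \in \Lambda^+ + \rho$ and $\nu \in \Lambda^+ + \rho$. The relevant integral is supported on those $n \in N$ for which $n t_\nu \in I t_\lambda K$; the conductor condition $\Psi|_{N \cap I} = 1$, $\Psi|_{N^0_\alpha} \ne 1$ forces the integral to vanish unless the torus variables align appropriately. A careful bookkeeping at the Iwahori level, using the cell decomposition $G = \bigsqcup_\mu I t_\mu K$, should show that up to a nonzero constant $S_\Psi(\theta_\lambda^K) = \phi_\lambda$ for $\lambda \in \Lambda^+ + \rho$, so that $S_\Psi|_{H_{I,K}}$ hits every basis element of $(\ind^G_N \Psi)^K$. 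The analogous calculation for $\lambda \notin \Lambda^+ + \rho$ is where the twisting character $\Psi$ enters decisively: for coweights $\mu$ lying on the wall of a simple root $\alpha$, an integration against $\Psi$ restricted to the subgroup generated by $N^0_\alpha$ produces an orthogonality which one needs to relate to $\mathrm{alt}$.

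For the kernel, I would exploit property (4) of $H_I$. The kernel of $\mathrm{alt} : \C[\Lambda] \to \C[\Lambda]^{W,-}$ is the ideal in $\C[\Lambda]^W$-modules generated by elements of the form $\theta_\mu + \theta_{s\mu}$ when $s$ is a simple reflection with $s\mu$ above $\mu$, and by $\theta_\mu$ when $s\mu = \mu$. On the Iwahori side these correspond, via $t_s \theta_\mu = \theta_{s\mu} t_s + (1-q)(\theta_{s\mu} - \theta_\mu)/(1 - \theta_{-\alpha})$, to explicit combinations of $\theta_\mu^K$. I would check case by case that applying $S_\Psi$ to these combinations yields zero, using the fact that when $\mu$ lies on the wall of $\alpha$ the function $\theta_\mu^K$ is $s$-invariant on the appropriate $\mathrm{SL}_2$-block and the integral of $\Psi$ against it on $N^0_\alpha/N^1_\alpha$ vanishes. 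Matching dimensions of bases, indexed by $\Lambda^+ + \rho$ on both sides, would then force equality of kernels.

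Finally, $H_K$-equivariance of $j$ is formal once the diagram is established: for $\phi = S_\Psi(f)$ with $f \in H_{I,K}$, and $a \in H_K \subset Z_I$, one has $\phi * a = S_\Psi(f * a)$ because $S_\Psi$ is right $H_K$-linear as an integral operator, so $j(\phi * a) = \mathrm{alt}(S(f * a)) = \mathrm{alt}(S(f)) \cdot S(a) = j(\phi) \cdot S(a)$; surjectivity of $S_\Psi|_{H_{I,K}}$ extends this to all $\phi \in (\ind^G_N \Psi)^K$. The main obstacle, I expect, is the kernel identification: matching the precise combinatorics of the sign $(-1)^{l(w)}$ in $\mathrm{alt}$ with the cancellation mechanism supplied by $\Psi$ on the wall root subgroups, while controlling the quotient $(\theta_{s\mu} - \theta_\mu)/(1 - \theta_{-\alpha})$, requires a careful reduction of the Iwahori computation to rank one. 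An attractive alternative, which I would pursue in parallel, is to verify the equivariance $j(\phi_\mu * A_\lambda) = r_\mu \cdot a_\lambda$ only for $\mu$ sufficiently deep in the dominant chamber (where combinatorial complications disappear), and then use Noetherianity of $H_K \simeq \C[\Lambda]^W$, acting torsion-freely on $\C[\Lambda]^{W,-}$, to propagate the identity to all $\mu$.
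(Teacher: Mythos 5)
Your proposal is correct and follows essentially the same route as the paper: restrict $S_\Psi$ to $H_{I,K}$, show $S_\Psi(\theta^K_\mu)=\phi_\mu$ for $\mu\in\Lambda^++\rho$ using the Iwahori factorization $It_\mu K=(N\cap I)t_\mu K$ for dominant $\mu$, identify $\Ker(S_\Psi)=\Span\{\theta^K_\mu+\theta^K_{s\mu}\}=\Ker(\operatorname{alt}\circ S)$, and compare bases, with $H_K$-equivariance formal as you note. One simplification the paper makes in the kernel step, which sidesteps the rank-one reduction you flag as the main obstacle: since $\iota_\alpha=1_I+t_s$ is the characteristic function of the parahoric subgroup $I_\alpha$ and $\Psi$ is nontrivial on $N^0_\alpha\subset N\cap I_\alpha$, one has $S_\Psi(\iota_\alpha\ast H_{I,K})=0$ outright, and writing $\mu=\mu'+k\alpha$ with $(\mu',\alpha)=0$ gives $\theta_\mu+\theta_{s\mu}=\theta_{\mu'}(\theta_{k\alpha}+\theta_{-k\alpha})$, which commutes with $\iota_\alpha$, so $\theta^K_\mu+\theta^K_{s\mu}\in\iota_\alpha\ast H_{I,K}$ directly --- no need to wrestle with the $(1-q)(\theta_{s\mu}-\theta_\mu)/(1-\theta_{-\alpha})$ term of the Bernstein relation at all.
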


\subsection{ The twisted Satake isomorphism}

For a fixed character $\Psi$ of $N$, consider a twisted Satake map
$$S_\Psi: C_c(G/I)\rightarrow \left(\ind^G_N \Psi\right)^I$$ defined by
$$S_\Psi(f)(t)=\int\limits_{N} f(nt)\overline{\Psi(n)} \,dn.$$

\begin{Cor}\label{CS2}
The restriction of $S_\Psi$ to the  right $H_K$ submodule  
$\theta^K_\rho\ast H_K$ defines an  isomorphism 
 $$S_\Psi:\theta^K_\rho\ast H_K\simeq \left(\ind^G_N \Psi\right)^K$$ such that 
 $S_\Psi(\theta^K_\rho\ast  A_\lambda)=\phi_{\lambda+\rho}$.
\end{Cor}

\begin{proof} 
By Weyl character formula $r_{\lambda+\rho}=r_\rho\cdot a_\lambda$.
Hence 
$$j(\phi_{\lambda+\rho})=r_{\lambda+\rho}=r_\rho\cdot a_\lambda =
j(\phi_\rho\ast A_\lambda),$$
and thus $\phi_{\rho}\ast A_\lambda=\phi_{\lambda+\rho}.$

Restricting  $S_\Psi$ to $H_{I,K}$, we obtain
$$S_\Psi(\theta^K_\rho\ast A_\lambda)=S_\Psi(\theta^K_\rho)\ast A_\lambda=
\phi_\rho\ast A_\lambda=\phi_{\lambda+\rho}.$$ 
 Since $A_\lambda$ and $\phi_{\lambda+\rho}$ are bases
of $H_K$ and $\left(\ind^G_N \Psi\right)^K$ respectively, 
the map $S_\Psi$ is an isomorphism.
\end{proof}

To prove the theorem we shall need two lemmas. The first one 
ensures surjectivity of the map $S_\Psi$ and the second one 
describes its kernel.

\begin{Lem}
$S_\Psi(\theta_\mu^K)=\phi_\mu$ for all $\mu\in \Lambda^++\rho$. 
In particular the map  
$$S_\Psi:H_{I,K}\rightarrow  \left(\ind^G_N\Psi\right)^K$$
is surjective.
\end{Lem}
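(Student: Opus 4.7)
The plan is to check $S_\Psi(\theta_\mu^K) = \phi_\mu$ by evaluating both sides at the torus representatives $t_\gamma$, $\gamma \in \Lambda$, which determines any function in $\bigl(\ind^G_N \Psi\bigr)^K$. The strategy is to reduce the twisted Satake integral to Savin's already-established identity $S(\theta_\mu^K) = e^\mu$, by showing that the locus over which one integrates lies inside $N \cap I$, on which $\overline{\Psi}$ is trivial by the conductor-$1$ hypothesis. Once $S_\Psi(\theta_\mu^K) = \phi_\mu$ is shown, surjectivity is immediate since $\{\phi_\mu : \mu \in \Lambda^+ + \rho\}$ is a basis of the target.

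First I would note that for $\mu \in \Lambda^+ + \rho \subseteq \Lambda^+$ the element $\theta_\mu^K = \delta_B^{1/2}(t_\mu)\, 1_{It_\mu I} \ast 1_K$ is supported on $It_\mu K$. Using the Iwahori factorization $I = (N \cap I)\,T_K\,(\bar N \cap I)$ together with $t_\mu^{-1}(\bar N \cap I) t_\mu \subseteq \bar N \cap K$ (valid since $\mu$ is dominant), one obtains $It_\mu K = (N \cap I)\,t_\mu K$. Consequently, whenever $nt_\gamma \in It_\mu K$, Iwasawa uniqueness of the $T/T_K$-component forces $\gamma = \mu$, so $S_\Psi(\theta_\mu^K)(t_\gamma) = 0$ for $\gamma \neq \mu$.

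The crucial step -- and the only place where \emph{strict} dominance enters -- is the inclusion $N_{\mu,\mu} := \{n \in N : nt_\mu \in (N\cap I)\, t_\mu K\} \subseteq N \cap I$. Writing such an $n$ as $n = n' \cdot (t_\mu k t_\mu^{-1})$ with $n' \in N \cap I$ and $k \in K$, the requirement $n \in N$ forces $k \in N \cap K$. For every positive root $\alpha$ one has $\langle \alpha, \mu\rangle \geq \langle \alpha, \rho\rangle \geq 1$ (using $\mu \in \Lambda^+ + \rho$), so conjugation by $t_\mu$ carries $N^0_\alpha$ into $N^{\langle \alpha,\mu\rangle}_\alpha \subseteq N^1_\alpha$, hence $t_\mu k t_\mu^{-1} \in N \cap I$ and $n \in N \cap I$. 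Since $\overline{\Psi}|_{N \cap I} = 1$, one concludes
\[
S_\Psi(\theta_\mu^K)(t_\mu) \;=\; \int_N \theta_\mu^K(nt_\mu)\, dn \;=\; \delta_B^{1/2}(t_\mu) \;=\; \phi_\mu(t_\mu),
\]
the middle equality being Savin's formula $S(\theta_\mu^K) = e^\mu$ applied at $t_\mu$. The main obstacle is precisely the containment $N_{\mu,\mu} \subseteq N \cap I$: for merely dominant $\mu$ one would only get $N_{\mu,\mu} = N \cap K$, on which $\Psi$ is nontrivial -- which also explains why the Whittaker-spherical basis is indexed by $\Lambda^+ + \rho$ rather than $\Lambda^+$.
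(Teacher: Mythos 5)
Your proof is correct and follows essentially the same route as the paper's: reduce to the Iwahori factorization $I t_\mu K = (N\cap I)\,t_\mu K$, use Iwasawa uniqueness to conclude $N_{\gamma,\mu}=\emptyset$ for $\gamma\neq\mu$, and exploit $\mu\in\Lambda^++\rho$ to show $t_\mu(N\cap K)t_\mu^{-1}\subseteq N\cap I$ so that $\overline{\Psi}$ is trivial on $N_{\mu,\mu}$. The only cosmetic difference is that you cite Savin's untwisted formula $S(\theta_\mu^K)=e^\mu$ to fix the overall normalization, which the paper leaves implicit.
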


\begin{proof}
It is enough to compute $S_\Psi(\theta_{\mu}\ast 1_K)(t_\gamma)$ for 
$\gamma\in \Lambda^+.$

Since $\mu$ is dominant one has 
$$\theta^K_\mu= \delta_B^{1/2}(t_\mu)1_{It_\mu K}$$ and hence
 $$S_\Psi(\theta^K_\mu)(t_\gamma)=
\delta_B^{1/2}(t_\gamma)\int\limits_{N_{\gamma,\mu}}\overline{\Psi(n)}\,dn,$$
where 
$$N_{\gamma,\mu}=\{n\in N: n t_\gamma\in I t_\mu K\}.$$

The set 
$$N_{\gamma,\mu}=
\left\{
\begin{array}{ll}\emptyset& \gamma\neq \mu\\
N\cap K & \gamma=\mu
\end{array}
\right.
$$

Indeed, since $\mu\in \Lambda^+$ one has 
 $I t_\mu K = (N\cap I) t_\mu K$. 
One inclusion is obvious. For another inclusion use the Iwahori factorization
$$I=(I\cap N)T_K(I\cap \bar N)$$ to represent any $g\in I t_\mu K$ as
$$ g=n a_0 \bar n t_\mu k = n t_\mu a_0(t_\mu^{-1} \bar n t_\mu) k,$$
where $n\in N\cap I, a_0\in T_K, \bar n\in \bar N, k\in K$.
Since $\mu$ is dominant one has
$(t_\mu^{-1} \bar n t_\mu)\in K$. So $g\in (N\cap I) t_\mu K$.
Hence $N_{\gamma,\mu}=\emptyset$ unless $\gamma=\mu$
and $N_{\mu,\mu}=(N\cap I)t_\mu (N\cap K) t^{-1}_\mu = N\cap I$
since $\mu \in \Lambda^++\rho.$
In particular $\Psi|_{N_{\mu,\mu}}=1$. Hence 
$$S_\Psi(\theta_\mu\ast 1_K)=\phi_\mu.$$
\end{proof}

\begin{Lem} Let 
$\alpha\in \Delta$, $s$ be a simple reflection corresponding
to $\alpha$ and $\iota_\alpha=1_I+t_s$ be the characteristic function of a parahoric 
subgroup $I_\alpha$ corresponding to  $\alpha$.

\begin{enumerate} 
\item   $S_\Psi(\iota_\alpha)=0$.
\item 
$S_\Psi(\theta^K_\mu+\theta^K_{s.\mu})=0$ for all $\mu\in \Lambda$. 
\end{enumerate}
\end{Lem}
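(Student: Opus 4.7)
The plan is to handle part (1) directly, by exploiting the left invariance of $\iota_\alpha$ under $N_\alpha^0$ together with the conductor-one hypothesis on $\Psi$, and then to deduce part (2) from (1) using the right $H_I$-equivariance of $S_\Psi$ combined with the Bernstein relation recalled in Section~\ref{IH:algebra}(4).

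For part (1) I observe that $\iota_\alpha$ is the characteristic function of the parahoric $I_\alpha\supset N_\alpha^0$, so it is left invariant under every $x_\alpha(r)\in N_\alpha^0$. The substitution $n\mapsto x_\alpha(-r)n$ in the defining integral then gives, for every $g\in G$,
$$S_\Psi(\iota_\alpha)(g)=\Psi(x_\alpha(r))\,S_\Psi(\iota_\alpha)(g).$$
Since $\Psi$ has conductor one I can pick $x_\alpha(r)\in N_\alpha^0$ with $\Psi(x_\alpha(r))\neq 1$, which forces $S_\Psi(\iota_\alpha)(g)=0$.

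For part (2) the idea is to exploit (1) via the right $H_I$-action. First I would check that $S_\Psi$ is right $H_I$-equivariant, so that $S_\Psi(\iota_\alpha\ast h)=S_\Psi(\iota_\alpha)\ast h=0$ for every $h\in H_I$. Writing $\iota_\alpha=1_I+t_s$, applying the Bernstein relation of Section~\ref{IH:algebra}(4), and using the easy identity $t_s\ast 1_K=q\cdot 1_K$ (which follows from $\mathrm{vol}(IsI)=q$), I would expand
$$\iota_\alpha\ast\theta_\mu\ast 1_K=\theta^K_\mu+q\,\theta^K_{s\mu}+(1-q)\,\eta_\mu^K,$$
where $\eta_\mu=(\theta_{s\mu}-\theta_\mu)/(1-\theta_{-\alpha})\in H_I$ and $\eta_\mu^K=\eta_\mu\ast 1_K$. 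Swapping $\mu\leftrightarrow s\mu$ and noting $\eta_{s\mu}=-\eta_\mu$ gives a companion identity whose $\eta_\mu^K$-coefficient has opposite sign. Applying $S_\Psi$ to both identities and summing kills the $\eta_\mu^K$ contribution and yields
$$(1+q)\,\bigl(S_\Psi(\theta^K_\mu)+S_\Psi(\theta^K_{s\mu})\bigr)=0,$$
from which (2) follows since $1+q\neq 0$.

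The step I expect to require the most care is establishing the right $H_I$-equivariance of $S_\Psi$ in enough generality that $S_\Psi(\iota_\alpha\ast h)=0$ is available for arbitrary $h\in H_I$; once this intertwining is in hand, the rest is a short manipulation inside $H_I$ together with the cancellation produced by $\eta_{s\mu}=-\eta_\mu$.
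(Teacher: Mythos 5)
Your proof is correct and follows the paper's overall strategy: establish (1) using that $N^0_\alpha\subset I_\alpha$ while $\Psi|_{N^0_\alpha}\neq 1$, then deduce (2) from (1) via the right $H_I$-equivariance of $S_\Psi$ and the Bernstein relation. The execution differs in small ways that are worth recording. For (1) you use a change-of-variables argument from the left $N^0_\alpha$-invariance of $\iota_\alpha$, killing $S_\Psi(\iota_\alpha)$ at every point simultaneously; the paper instead unwinds the integral, determines its support case by case, and then observes the inner integral over $N^0_\alpha$ vanishes --- same mechanism, more bookkeeping, so your version is arguably cleaner. For (2) you apply the Bernstein relation of Section~\ref{IH:algebra}(4) to $\mu$ and to $s\mu$, use $t_s\ast 1_K=q\cdot 1_K$, and cancel the correction terms through $\eta_{s\mu}=-\eta_\mu$ to obtain $\iota_\alpha\ast(\theta_\mu+\theta_{s\mu})\ast 1_K=(1+q)\bigl(\theta^K_\mu+\theta^K_{s\mu}\bigr)$; the paper instead writes $\mu=\mu'+k\alpha$ with $(\mu',\alpha)=0$ and appeals to the packaged commutation statement (also from Section~\ref{IH:algebra}(4)) that $\iota_\alpha$ commutes with $\theta_{\mu'}(\theta_{k\alpha}+\theta_{-k\alpha})$, together with $\iota_\alpha\ast 1_K=(1+q)1_K$. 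Your double expansion is essentially a re-derivation of that commutation fact and in fact keeps the $(1+q)$ normalization more visibly correct than the paper's displayed identity does. The right $H_I$-equivariance you flag as the delicate point does hold by the standard Fubini manipulation on the convolution, and both proofs lean on it in the same way.
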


\begin{proof}
1) $$S_\Psi(\iota_\alpha)(tw)=
\int\limits_N \iota_\alpha(ntw) \overline{\Psi(n)}\, dn= 
\int\limits_{N\cap I_\alpha(tw)^{-1}} \overline{\Psi(n)}\,dn$$
The set $N\cap I_\alpha(tw)^{-1}$  is empty unless $w\in \{e, s\}$ and $t\in T_K$, in which case
$$S_\Psi(\iota_\alpha)(tw)=\int\limits_{N\cap I_\alpha} \overline{\Psi(n)}\,dn=0$$
since the integral contains an inner integral over $N^0_\alpha$ on which 
$\Psi$ is not trivial.

2) Let us represent any $\mu=\mu'+k\alpha$ where $(\mu',\alpha)=0$.
Then $s\mu=\mu'-k\alpha$.
In particular 
$$\theta^K_\mu+\theta^K_{s\mu}=\theta_{\mu'}
(\theta_{k\alpha}+\theta_{-k\alpha}) \iota_\alpha 1_K$$
By the results in \ref{IH:algebra} the element 
$i_\alpha$ commutes with  $\theta_{\mu'}
(\theta_{k\alpha}+\theta_{-k\alpha})$ and hence the above equals 
$$\iota_\alpha\theta_{\mu'}
(\theta_{k\alpha}+\theta_{-k\alpha}) 1_K=\iota_\alpha (\theta^K_\mu+\theta^K_{s\mu}) $$
 
By part $(1)$ it follows that $S_\Psi(\theta^K_\mu+\theta^K_{s\mu})=0$.
\end{proof}

\begin{proof}  of \ref{main}.
We have shown that the map $S_\Psi$ is surjective onto $(\ind^G_N \Psi)^K$
and 
$$\Ker S_\Psi=\Span\{ \theta_\mu -(-1)^{l(w)}\theta_{w\mu}| \mu \in \Lambda, w\in W\}.$$
Another words 
$$(\ind^G_N \Psi)^K\simeq H_{I,K}/\Ker S_\Psi=\C[\Lambda]^{W,-}$$ as
$H_K\simeq \C[\Lambda]^W$-modules.
\end{proof}

\section{Casselman-Shalika Formula}\label{CS:section}

Let $(\pi,G,V)$ be an irreducible smooth generic unramified representation and 
denote by $\gamma\in {^LT}/W$ its Satake conjugacy class. 
Choose a spherical vector $v_0$ and normalize the Whittaker functional
$W_\gamma\in \Hom_{G}(\pi, \Ind^G_N \bar \Psi)$ such that $W_\gamma(t_\rho v_0)=1$.

The Casselman-Shalika formula reads as follows:

\begin{Thm}\label{CS1}
$$W_\gamma(v_0)(t_{\lambda+\rho})=
 \left\{
\begin{array}{ll}\delta_B^{1/2}(t_{\lambda+\rho}) 
\tr V_\lambda(t_\gamma)& \lambda \in \Lambda^+\\
0 & {\rm otherwise}
\end{array}
\right.$$  
\end{Thm}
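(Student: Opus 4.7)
The plan is to read off the formula from Corollary \ref{CS2} by exploiting the Hecke-eigenvalue property of the spherical vector, in the spirit of the equivalence between the twisted Satake isomorphism and the Casselman-Shalika formula explained in [FGKV].

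I would first dispose of the vanishing case $\mu \notin \Lambda^+ + \rho$, which is independent of the rest. For $n \in N \cap K$ with $t_\mu^{-1} n t_\mu \in K$, combining right $K$-invariance and left $(N, \bar\Psi)$-equivariance of $W_\gamma(v_0)$ forces $W_\gamma(v_0)(t_\mu) = \bar\Psi(n) W_\gamma(v_0)(t_\mu)$. Whenever some simple root $\alpha$ satisfies $\langle \alpha, \mu\rangle \le 0$, one can exhibit $n = x_\alpha(r) \in N_\alpha^0$ with $t_\mu^{-1} n t_\mu \in K$ and $\bar\Psi(n) \neq 1$ (using the conductor condition $\Psi|_{N_\alpha^0} \neq 1$), producing the required vanishing.

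For the non-vanishing values I would introduce the pairing
$$L(\phi) = \int_{N\backslash G} \phi(g)\, W_\gamma(v_0)(g)\, d\bar g, \qquad \phi \in \left(\ind^G_N \Psi\right)^K,$$
which is well-defined since $\phi \cdot W_\gamma(v_0)$ is $N$-invariant and has compact support modulo $N$. The eigenvalue equation $\pi(A_\lambda) v_0 = \tr V_\lambda(t_\gamma)\, v_0$, after applying $W_\gamma$ and rearranging the convolution by a Fubini argument, translates into the covariance
$$L(\phi \ast A_\lambda) = \tr V_\lambda(t_\gamma)\cdot L(\phi).$$
Corollary \ref{CS2} supplies the identity $\phi_{\lambda+\rho} = \phi_\rho \ast A_\lambda$, so setting $\phi = \phi_\rho$ yields $L(\phi_{\lambda+\rho}) = \tr V_\lambda(t_\gamma)\cdot L(\phi_\rho)$.

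On the other hand $L(\phi_\mu)$ evaluates by inspection: the product $\phi_\mu(g) W_\gamma(v_0)(g)$ is the constant $\delta_B^{1/2}(t_\mu) W_\gamma(v_0)(t_\mu)$ on $N t_\mu K$ (the characters $\Psi$ and $\bar\Psi$ cancel), so $L(\phi_\mu) = c_\mu \cdot \delta_B^{1/2}(t_\mu)\, W_\gamma(v_0)(t_\mu)$, where $c_\mu$ is the measure of $N\backslash N t_\mu K$ in $N\backslash G$. Dividing the two expressions and using the normalization $W_\gamma(v_0)(t_\rho) = 1$ expresses $W_\gamma(v_0)(t_{\lambda+\rho})$ as $\tr V_\lambda(t_\gamma)$ times a ratio of volumes and $\delta_B^{1/2}$ factors. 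The main calculational burden is then verifying that this prefactor collapses to $\delta_B^{1/2}(t_{\lambda+\rho})$: using the identification $N\backslash N t_\mu K \cong K/(K \cap t_\mu^{-1} N t_\mu)$ together with the root-by-root decomposition $K \cap t_\mu^{-1} N t_\mu = \prod_{\alpha > 0} N_\alpha^{-\langle \alpha, \mu\rangle}$ and the Haar normalization $\mathrm{vol}(N_\alpha^0) = 1$, this reduces to a direct Haar-measure computation which produces exactly the required power of $\delta_B$.
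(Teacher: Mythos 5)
Your proposal is correct and follows essentially the same route as the paper: both define the pairing $L=r_\gamma$ between $\left(\ind^G_N\Psi\right)^K$ and the Whittaker function $W_\gamma(v_0)$, use the Hecke eigenvalue equation for the spherical vector (equivalently the $\chi_\gamma$-eigenproperty) together with Corollary \ref{CS2} to obtain $L(\phi_{\lambda+\rho})=\tr V_\lambda(\gamma)\,L(\phi_\rho)$, and then evaluate $L$ on the basis functions $\phi_\mu$ to read off $W_\gamma(v_0)(t_{\lambda+\rho})$. The only differences are cosmetic --- you spell out the vanishing case $\mu\notin\Lambda^++\rho$ and the final Haar-volume bookkeeping, both of which the paper leaves implicit.
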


It is shown in [FGKV], that Theorem  $5.2$ that the formula 
(\ref{CS1}) implies the  Corollary \ref{CS2} and it is 
mentioned that the two statements are equivalent.
Let us now prove the other direction.

\begin{proof}
We deduce the formula $\ref{CS1}$ from \ref{CS2}.
Let $\pi$ be a generic unramified representation with the Satake parameter 
$\gamma\in {^LT}$ and a spherical vector $v_0$ and the Whittaker model
\hbox{$W_\gamma:\pi_\gamma\rightarrow \Ind^G_N\bar \Psi$} such that 
$W_\gamma(v_0)(t_\rho)=1$.  Define the map
$\chi_\gamma: H_K\rightarrow \C$ by 
$$\pi(f)v_0=\int\limits_{G} f(g)\pi(g)v_0\, dg =\chi_\gamma(f) v_0$$
and the map $r_\gamma: \ind^G_N\Psi\rightarrow \C$ by 
$$r_\gamma(\phi)=\int\limits_{N\backslash G} W_\gamma(v_0)(g)\phi(g)\, dg.$$

Then 
$$r_\gamma(S_\Psi(\theta^K_\rho\ast A_\lambda))=
\int\limits_{N\backslash G}\int\limits_{N} (\theta^K_\rho\ast A_\lambda)(ng)\bar \Psi(n) W_\gamma(v_0)(g) \, dn\, dg=$$
$$\int\limits_{G} \int\limits_{G}(\theta^K_\rho\ast A_\lambda)(g) W_\gamma(v_0)(g) \, dg=$$
$$\int\limits_{G}\int\limits_{G} \theta^K_\rho(gx^{-1}) A_\lambda(x) W_\gamma(gx^{-1}\cdot x \cdot v_0)(1)\, dx \, dg=
\chi_\gamma(A_\lambda) W_\gamma(v_0)(t_\rho).$$


Under the identification $H_K\simeq \Rep(^LG)$ the homomorphism
$\chi_\gamma$ sends an irreducible representation $V$ to $\tr V(\gamma)$.
In particular $\chi_\gamma(A_\lambda)=\tr V_\lambda(\gamma)$.
$$\tr V_\lambda(\gamma)=\chi_\gamma(A_\lambda)W(v_0)(t_\rho)=$$
$$r_\gamma(S_\Psi(\theta^K_\rho \ast A_\lambda))=r_\gamma(\phi_{\lambda+\rho})=\delta_B^{-1/2}(t_{\lambda+\rho}) 
W_\gamma(v_0)(t_{\lambda+\rho})$$
Hence 
$$W_\gamma(v_0)(t_{\lambda+\rho})=\delta_B^{1/2}(t_{\lambda+\rho})\tr V_\lambda(\gamma).$$
\end{proof}

\end{document}